\newtheorem{theorem}{Theorem}
\newtheorem{claim}{Claim}
\newtheorem{proposition}{Proposition}
\newtheorem{definition}{Definition}
\newtheorem{corollary}{Corollary}
\newtheorem{remark}{Remark}
\numberwithin{equation}{section}
\numberwithin{theorem}{section}
\numberwithin{proposition}{section}
\numberwithin{lemma}{section}
\numberwithin{claim}{section}
\numberwithin{corollary}{section}
\newcommand{\bull}{\ensuremath{{}\bullet{}}}
\newcommand{\elam}{\ensuremath{\mathbb{E}_{\lambda_{\bull}}}}
\newcommand{\emu}{\ensuremath{\mathbb{E}_{\mu_{\bull}}}}
 \newcommand{\gr}{\ensuremath{\mathbb{G}(N-n,\mathbb{C}^{N+1})}}
\newcommand{\cpn}{\ensuremath{\mathbb{P}^{N}}}
\newcommand{\slnc}{\ensuremath{SL(N+1,\mathbb{C})}}
\newcommand{\dlb}{\ensuremath{\overline{\partial}}}
\newcommand{\dl}{\ensuremath{\partial}}
\newcommand{\ra}{\ensuremath{\longrightarrow}}
\newcommand{\om}{\ensuremath{\omega}}
\newcommand{\vp}{\ensuremath{\varphi}}
\newcommand{\vps}{\ensuremath{\varphi_{\sigma}}}
\newcommand{\vpt}{\ensuremath{\varphi_{t}}}
\newcommand{\cn}{\ensuremath{\mathbb{C}^{N+1}}}
 \newcommand{\dz}{\ensuremath{\frac{\dl}{\dl z}}} 
\newcommand{\dzb}{\ensuremath{\frac{\dl}{\dl \bar{z}}}} 
\begin{document}
\bibliographystyle{alpha}
 \title{ The discriminant of a space curve is stable} 
\author{Sean Timothy Paul}
%%%%%%%%%%%%%%%%%%%%%%%%%%%%%%%%%%%%%%%%%%%%%%%%%%
\date{June 28 , 2012}
 \vspace{-5mm}
\begin{abstract}{We prove that the discriminant of a nonsingular space curve of genus $g\geq 2$ is stable with respect to the standard action of the special linear group.}
\end{abstract}
\maketitle
 
%\section{Introduction and statement of the main Theorem}
Historically the construction of the moduli space of curves of genus $g$ was carried out by David Mumford in his epoch making treatise \emph{Geometric Invariant Theory} . The basic first step in the construction was to verify the {stability} of either the Cayley-Chow form or the Hilbert point of a (smooth) complex projective-algebraic curve.\\  
 \ \\
\noindent \textbf{Theorem.} (Mumford \cite{git}) Let $X\ra \cpn$ be a nonsingular space curve of genus $g\geq 2$ embedded by a very ample complete linear system. Assume that $d=\deg(X)\geq 2g$. Let $R_X$ denote the \textbf{\emph{Cayley-Chow form}} of $X$. Then $R_X$ is stable with respect to the action of $G=\slnc$.\newline
\ \\
\noindent \textbf{Theorem.}  (Geiseker \cite{moduli})   Let $X\ra \cpn$ be a nonsingular space curve of genus $g\geq 2$ embedded by a very ample complete linear system. Assume that $d=\deg(X)\geq 2g$. Let $\mathcal{H}_m(X)$ denote the $mth$ \textbf{\emph{Hilbert point}} of $X$. Then there exists a uniform constant $m_0$ such that  $\mathcal{H}_m(X)$ is stable with respect to the action of $G$ for all $m\geq m_0$. \\
\ \\
 The purpose of this note is to establish the corresponding statement for the dual of a nonsingular algebraic curve .
\begin{theorem}\label{main}  Let $X\subset \cpn$ be a nonsingular space curve of genus $g\geq 2$ embedded by a very ample complete linear system. Assume that $d=\deg(X)\geq 2g$. Let $\Delta_X$ denote the \textbf{$X$-discriminant} (the defining polynomial of the dual variety $X^{\vee}$)  . Then $\Delta_X$ is stable with respect to the action of $G$ .  
\end{theorem}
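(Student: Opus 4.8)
The plan is to reduce the statement to the Hilbert--Mumford numerical criterion and then to import Mumford's stability of the Chow form through a weight comparison. Since $g\ge 2$ the curve $X$ is not a line, so its dual variety $X^{\vee}\subset\cpnd$ is a hypersurface and $\Delta_X$ is a genuine form; its degree is the class of $X$, namely $m=\deg X^{\vee}=2d+2g-2$ (a general pencil of hyperplanes cuts out a $g^{1}_{d}$ whose ramification, by Riemann--Hurwitz, consists of $2d+2g-2$ points). Thus $[\Delta_X]$ is a point of $\mathbb{P}(V)$, where $V$ is the $G$-module of degree-$m$ forms on $\cpnd$. Because $G=\slnc$ is reductive, $[\Delta_X]$ is stable precisely when, for every nontrivial one--parameter subgroup $\lambda:\mathbb{C}^{*}\to G$, the $\lambda$-weights occurring in $\Delta_X$ strictly straddle zero, i.e. the Mumford weight $\mu(\Delta_X,\lambda)>0$. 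The whole problem therefore becomes a single weight estimate, uniform in $\lambda$.

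Next I would make the weight computable. Two features are decisive. First, for a curve $\dim X=1$, so $\xhyp$ collapses to $X$ itself and $\Delta_X$ is literally the hyperdiscriminant; this places the problem inside the discriminant/resultant formalism, where weights are encoded by Newton (weight) polytopes. Second, $\Delta_X$ is recoverable from the conormal variety $\mathrm{Con}(X)\subset\cpn\times\cpnd$, which is a $\mathbb{P}^{N-2}$-bundle over $X$ and on which $G$ acts compatibly (standard action on the first factor, dual action on the second); its second projection maps $\mathrm{Con}(X)$ birationally onto $X^{\vee}$. Fixing $\lambda$ and forming the flat limit $X_{0}=\lim_{t\to 0}\lambda(t)\cdot X$, I expect the weight $\mu(\Delta_X,\lambda)$ to decompose into a ``base'' term governed by the Chow form $R_X$ of the curve and a nonnegative ``fibre/ramification'' term measuring how tangent hyperplanes are absorbed into the special fibre.

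The comparison step then invokes the quoted theorem of Mumford: $R_X$ is stable, so $\mu(R_X,\lambda)>0$ for every nontrivial $\lambda$. I would establish an inequality of the shape $\mu(\Delta_X,\lambda)\ge c\,\mu(R_X,\lambda)$ with $c>0$ --- equivalently, that the minimal $\lambda$-weight of $\Delta_X$ is bounded above by a positive multiple of that of $R_X$ plus a correction of the correct sign --- whence strict positivity passes from the Chow form to the discriminant. The hypotheses $d\ge 2g$ and $g\ge 2$ enter exactly here: they force the embedding to be sufficiently positive (in particular very ample with controlled hyperplane sections and flat limits), and they guarantee that the ramification/fibre contribution has the sign required for the comparison --- at the polytope level, that the hyperdiscriminant polytope dominates an appropriate dilate of the Chow polytope.

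The main obstacle is the behaviour of the dual variety under degeneration. Duality is not continuous in families: as $t\to 0$ the discriminant $\Delta_{X_{0}}$ can jump in degree, pick up spurious components (the duals of the singularities of $X_{0}$), or collapse, and these are precisely the $\lambda$ for which the naive estimate is tightest. Controlling the associated correction term so that it cannot reverse the strict inequality inherited from $R_X$ is the heart of the argument. I would handle it by passing to a semistable, simultaneous resolution of the one--parameter family $\{\lambda(t)\cdot X\}$, computing both weights on this model via intersection theory, and using $d\ge2g$ to bound the singularities of $X_{0}$; alternatively, a direct comparison of the hyperdiscriminant and Chow weight polytopes makes the correction manifestly nonnegative and yields the same conclusion.
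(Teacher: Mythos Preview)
Your overall strategy matches the paper's: reduce to the Hilbert--Mumford numerical criterion, compare the $\lambda$-weight of $\Delta_X$ against that of the Chow form $R_X$ for every one-parameter subgroup $\lambda$, and then import strict negativity from Mumford's stability of $R_X$. Where your proposal diverges from the paper --- and where it has a genuine gap --- is in the proof of the weight inequality itself.

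You propose to obtain (after degree normalization) $w_\lambda(\Delta)\le w_\lambda(R)$ by analyzing the conormal variety, forming flat limits $X_0=\lim_{t\to 0}\lambda(t)\cdot X$, passing to a semistable model of the one-parameter family, and computing both weights by intersection theory; alternatively by a direct polytope comparison. None of this is actually carried out. You yourself flag the discontinuity of duality under degeneration as ``the main obstacle'', and your claim that the fibre/ramification correction has the right sign (equivalently, that the hyperdiscriminant polytope dominates the appropriate dilate of the Chow polytope) remains an unproved assertion. As written, the proposal is a plan, not a proof: the step that does all the work is precisely the one you leave open.

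The paper bypasses the degeneration analysis entirely with a transcendental input. After normalizing $R:=R_X^{\deg\Delta_X}$ and $\Delta:=\Delta_X^{\deg R_X}$, Theorem~A (Paul) says that the Mabuchi K-energy along Bergman potentials equals, up to a positive constant,
\[
\nu_\omega(\sigma)=\log\frac{\|\sigma\cdot\Delta\|^2}{\|\Delta\|^2}-\log\frac{\|\sigma\cdot R\|^2}{\|R\|^2},
\]
so along any $\lambda$ one has $\nu_\omega(\lambda(t))=(w_\lambda(\Delta)-w_\lambda(R))\log|t|^2+O(1)$. The inequality $w_\lambda(\Delta)\le w_\lambda(R)$ then follows at once from the lower bound on $\nu_\omega$, which for a curve of genus $g\ge 1$ is a three-line observation (entropy term $\ge -1/e$, gradient term $\ge 0$ since $2g-2\ge 0$, potential term $\ge -\|h_\omega\|_{C^0}$). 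In particular the hypothesis on the genus enters only through the sign of $2g-2$ in the K-energy formula, not through any control of the singularities of $X_0$ as you suggest; the conditions $g\ge 2$ and $d\ge 2g$ are invoked solely to quote Mumford's Chow stability and conclude $w_\lambda(R)<0$.
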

%%%%%%%%%%%%%%%%%%%%%%%%%%%%%%%%%%%%%%%%%%%%%%%%%%%%%%%%%%%%%%%%%%%%%%%%%%%%%%%
\subsection{Resultants and Discriminants}
Let $X\ra \cpn$ be a complex algebraic curve. Let $R_X$ denote the Cayley-Chow form of $X\ra \cpn$ and let $\Delta_X$ be the $X$-discriminant (see \cite{gkz}) .  For our purpose we must normalize these polynomials
 \begin{align} 
 R := R^{\deg(\Delta_X )}_{X} \ ,\ \Delta :=\Delta ^{\deg(R_X)}_{X} \ .
  \end{align}
  It is well known that up to scaling these lie in finite dimensional (irreducible) $G$-modules $\elam$ and $\emu$ respectively. The reader can consult \cite{paul2011} for details on these modules.
 
 \subsection{The Mabuchi Energy}
 Let $(X,\om)$ be a closed K\"ahler manifold of dimension $n$, where $\om$ denotes the K\"ahler form.
 The space of K\"ahler potentials will be denoted by $\mathcal{H}_\om$
\begin{align*}
\mathcal{H}_\om:=\{\vp\in C^{\infty}(X)\ |\ \om_\vp:=\om+\frac{\sqrt{-1}}{2\pi}\dl\dlb\vp>0 \} \ .
\end{align*}
 
 \begin{definition} (Mabuchi,  \cite{integratingfutaki})
\emph{The \textbf{\emph{K-Energy}} of $(X,\om)$ is the map $\nu_\om:\mathcal{H}_\om\ra \mathbb{R}$ given by the following expression
\begin{align*}
 \qquad \nu_{\omega}(\varphi):= -\frac{1}{V}\int_{0}^{1}\int_{X}\dot{\varphi_{t}}(\mbox{Scal}(\varphi_{t})-\mu)\omega_{t}^{n}dt.
\end{align*}
 $\varphi_{t}$ is a smooth path in $\mathcal{H}_\om$ joining $0$ with $\varphi$, $\mbox{Scal}(\varphi_{t})$ denotes the scalar curvature of the metric $\om_t:= \om+\sqrt{-1}\dl\dlb \varphi_{t}$ , $\mu$ denotes the average of the scalar curvature of $\om$ , and $V$ is the volume. }
\end{definition}
It is well known that the K-Energy does not depend on the path chosen (see \cite{integratingfutaki}) .  
Suppose that $\om$ represents a multiple of the \emph{canonical class}
\begin{align}
\mbox{Ric}(\om)= \frac{\mu}{n}\om+\frac{\sqrt{-1}}{2\pi}\dl\dlb h_\om \ .
\end{align}
In this case there is the following well known direct formula for the K-energy . 
\begin{align} \label{directformula}
\begin{split}
&\nu_{\omega}(\varphi)=\int_{X}\mbox{log}\left(\frac{{\omega_{\varphi}}^{n}}{\omega^{n}}\right)\frac{{\omega_{\varphi}}^{n}}{V} - \frac{\mu}{n}(I_{\omega}(\varphi)-J_{\omega}(\varphi)) -\frac{1}{V}\int_Xh_\om(\om^n_\vp - \om^n) \\
\ \\
& J_{\omega}(\varphi):= \frac{1}{V}\int_{X}\sum_{i=0}^{n-1}\frac{\sqrt{-1}}{2\pi}\frac{i+1}{n+1}\dl\varphi \wedge \dlb
\varphi\wedge \omega^{i}\wedge {\omega_{\varphi} }^{n-i-1}\\
\ \\
&I_{\omega}(\varphi):= \frac{1}{V}\int_{X}\varphi(\omega^{n}-{\omega_{\varphi}}^{n})\ .
\end{split}
\end{align}
 
When $X$ is a compact Riemann surface of genus $g$ (i.e. an algebraic curve) equipped with some K\"ahler metric $\om$ the complicated expression above reduces to 
\begin{align}
\nu_{\omega}(\varphi)=\int_{X}\mbox{log}\left(\frac{{\omega_{\varphi}}}{\omega}\right)\frac{{\omega_{\varphi}}}{V}+\frac{( 2g-2)}{2V^2}\int_X|\nabla \varphi|^2\om-\frac{1}{V}\int_Xh_\om(\om_\vp - \om) \ .
\end{align}

\begin{proposition}\label{lowerbound}\emph{Let $X$ be a smooth algebraic curve of genus $g\geq 1$. Let $\om$ be any K\"ahler metric on $X$. Then
$\nu_{\omega}$ is bounded below on $\mathcal{H}_{\om}$.} 
\end{proposition}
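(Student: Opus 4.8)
The plan is to extract the bound directly from the explicit expression for $\nu_\om$ on a Riemann surface, estimating each of its three terms in turn. The decisive feature of the curve case is that the first two terms will turn out to be nonnegative, so that the entire burden falls on producing a uniform lower bound for the third (linear) term. Throughout I will use that on the compact curve $X$ the forms $\om_\vp$ and $\om$ represent the same cohomology class, hence have the same total mass $\int_X\om_\vp=\int_X\om=V$.

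For the first term, I would set $f:=\om_\vp/\om>0$, so that it becomes the entropy $\frac{1}{V}\int_X f\log f\,\om$. Applying the elementary inequality $t\log t\geq t-1$ pointwise and using $\int_X f\,\om=V$ gives
\[
\frac{1}{V}\int_X f\log f\,\om \;\geq\; \frac{1}{V}\int_X (f-1)\,\om \;=\;0
\]
(equivalently one may invoke Jensen's inequality for the convex function $t\mapsto t\log t$ against the probability measure $\om/V$). The second term is handled by the genus hypothesis alone: since $g\geq 1$ we have $2g-2\geq 0$, and as $\int_X|\nabla\vp|^2\om\geq 0$ this term is automatically nonnegative.

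The only term that is not manifestly nonnegative --- and hence the step I expect to be the crux --- is the linear term $-\frac{1}{V}\int_X h_\om(\om_\vp-\om)$, which a priori could be unbounded below. The key point is that it is in fact uniformly bounded independently of $\vp$: the Ricci potential $h_\om$ is smooth on the compact curve $X$, hence bounded, while $\om_\vp$ and $\om$ are positive measures each of total mass $V$, so that
\[
\left|\frac{1}{V}\int_X h_\om(\om_\vp-\om)\right|\;\leq\;\frac{\sup_X|h_\om|}{V}\Big(\int_X\om_\vp+\int_X\om\Big)\;=\;2\sup_X|h_\om|.
\]
No cancellation against the other two terms is needed. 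Combining the three estimates yields $\nu_\om(\vp)\geq -2\sup_X|h_\om|$ for every $\vp\in\mathcal{H}_\om$, a constant depending only on $(X,\om)$, which is the desired lower bound.
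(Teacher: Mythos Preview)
Your proof is correct and follows essentially the same approach as the paper: bound each of the three terms in the curve formula separately using, respectively, convexity of $t\log t$, the hypothesis $g\geq 1$, and boundedness of $h_\om$ on the compact curve together with $\int_X\om_\vp=\int_X\om=V$. The only cosmetic difference is that the paper uses the cruder pointwise bound $t\log t\geq -1/e$ on the entropy term (obtaining $-1/e$ instead of your sharper $0$) and records the bound on the third term as $-||h_\om||_{C^0}$ rather than your $-2\sup_X|h_\om|$.
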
 
\begin{proof} The first integral is bounded below by $\frac{-1}{e}$ , the second by $0$, and the third by $-||h_{\om}||_{C^0}$ .
\end{proof}
\begin{remark}
\emph{The Mabuchi energy is bounded below when $g=0$, but this fact is much more difficult to prove.}
\end{remark}

Now suppose that $X \subset \cpn$ is a smooth subvariety (in particular, a projective curve) and that $\om=\om_{FS}|_X$, where $\om_{FS}$ denotes the Fubini-Study K\"ahler form. Given $\sigma\in G$ we have
  \begin{align*}
\sigma^{*}\omega_{FS}= \omega_{FS}+\frac{\sqrt{-1}}{2\pi}\dl\dlb \varphi_{\sigma}>0.
\end{align*}
In this way we produce a set theoretic map  
\begin{align*}
G \ni\sigma\rightarrow \vps \in  \mathcal{H}_{\om}\ .
\end{align*}
It is easy to see that  $\varphi_{\sigma}$ is given by the formula
\begin{align}
\varphi_{\sigma}=\log \frac{||\sigma z||^{2}}{|| z||^{2}}  \ .
\end{align}
We define $\nu_{\om}(\sigma):=\nu_{\om}(\vps)$. In this way \emph{we may consider the K-energy as a function on $G$}. \\
\  \\
\textbf{Theorem A .} (Paul \cite{paul2011})
{ Let $X^n \hookrightarrow \cpn$ be a smooth, linearly normal, complex projective variety of degree $d \geq 2$ . 
 Then there are continuous norms on $\emu$ and $\elam$ such that the Mabuchi energy restricted to $\mathcal{B}$ is given as follows}
\begin{align*} 
\begin{split}
&d^2(n+1)\nu_{\om}({\sigma})=  \log  \frac{{||\sigma\cdot\Delta ||}^{2}}{{||\Delta ||}^{2}} -   \log\frac{{||\sigma\cdot R||}^{2}}{||R||^2}      \ . 
 \end{split}
\end{align*}
 %%%%%%%%%%%%%%%%%%%%%%%%%%%%%%%%%%%%%%%%%%%%%%%%%%%%%%%%%%%%%%%%%%%%%%%%%%%% 
 Let $\lambda:\mathbb{C}^*\ra G$ be an algebraic one parameter subgroup of $G$. We shall refer to such maps as \emph{degenerations}.
 
  \begin{definition}
\emph{Let $\mathbb{V}$ be a rational representation of $G$. Let $v\in \mathbb{V}\setminus\{0\}$. Let $\lambda$ be any degeneration in $G$. The \textbf{\emph{weight}}  $w_{\lambda}(v)$ of $\lambda$ on $v$ is the unique integer such that}
\begin{align*}
\lim_{|t|\rightarrow 0}t^{-w_{\lambda}(v)}\lambda(t)v \  \mbox{ {exists in $\mathbb{V}$ and is \textbf{not} zero}}.
\end{align*}
\end{definition}
 The  {Numerical Criterion} of Hilbert and Mumford says that a point $v\in \mathbb{V}$ is (semi)stable if and only if $w_{\lambda}(v)(\leq) < 0$ for all degenerations $\lambda$ in $G$. 
 \begin{proof} (of Theorem \ref{main})  
An immediate application of Theorem A yields the following.
 \begin{proposition}\emph{There is an asymptotic expansion}
 \begin{align*}
  \lim_{|t|\ra 0}  \nu_{\om}(\lambda(t))=\left(w_{\lambda}(\Delta)-w_{\lambda}(R)\right)\log|t|^2+O(1)  \quad  t\in \mathbb{C}^*  \ .
 \end{align*}
\end{proposition}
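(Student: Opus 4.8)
The plan is to prove the Proposition first by feeding the degeneration $\sigma=\lambda(t)$ into Theorem A, and then to combine it with Proposition \ref{lowerbound} and Mumford's stability of the Chow form to conclude the proof of Theorem \ref{main}.

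For the Proposition itself, I would apply Theorem A with $\sigma=\lambda(t)$. Since $X$ is a curve, $n=1$, so the left-hand side is $2d^2\nu_{\om}(\lambda(t))$ and the right-hand side is the difference of the two logarithmic norm quotients for $\Delta\in\emu$ and $R\in\elam$. I would then establish the standard weight asymptotics for each term separately. Decomposing $\Delta=\sum_j\Delta_j$ into $\lambda$-weight vectors with $\lambda(t)\Delta_j=t^{a_j}\Delta_j$, one has, in a Hermitian norm diagonalizing the weight spaces, $\|\lambda(t)\Delta\|^2=\sum_j|t|^{2a_j}\|\Delta_j\|^2$, which as $|t|\to 0$ is dominated by the minimal exponent $w_\lambda(\Delta)=\min_j a_j$, exactly the integer singled out by the Definition of the weight. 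Hence $\log\|\lambda(t)\Delta\|^2=w_\lambda(\Delta)\log|t|^2+O(1)$; because all continuous norms on the finite-dimensional module $\emu$ are equivalent, the leading coefficient is norm-independent and the discrepancy is absorbed into $O(1)$, so the same holds for the norm of Theorem A. The identical computation for $R\in\elam$, followed by subtraction (the constant $2d^2$ being harmless), gives the asserted expansion.

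To finish Theorem \ref{main} I would exploit boundedness below. Since $g\geq 2\geq 1$, Proposition \ref{lowerbound} guarantees that $\nu_{\om}$ is bounded below on $\mathcal{H}_{\om}$; as $\vplt\in\mathcal{H}_{\om}$ for each $t\in\mathbb{C}^*$, the function $t\mapsto\nu_{\om}(\lambda(t))$ is bounded below uniformly in $t$. Letting $|t|\to 0$ in the expansion, where $\log|t|^2\to-\infty$, a finite lower bound forces the coefficient to be nonpositive, i.e. $w_\lambda(\Delta)\leq w_\lambda(R)$ for every degeneration $\lambda$. Now I would invoke Mumford's Theorem: for $g\geq 2$ and $d\geq 2g$ the Chow form $R_X$ is stable, so $w_\lambda(R_X)<0$ for every nontrivial $\lambda$; by the normalization $R=R_X^{\deg(\Delta_X)}$ and homogeneity of the weight, $w_\lambda(R)=\deg(\Delta_X)\,w_\lambda(R_X)<0$. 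Therefore $w_\lambda(\Delta)\leq w_\lambda(R)<0$, and since $w_\lambda(\Delta)=\deg(R_X)\,w_\lambda(\Delta_X)$ with $\deg(R_X)>0$, we obtain $w_\lambda(\Delta_X)<0$ for every nontrivial degeneration $\lambda$. By the numerical criterion, $\Delta_X$ is stable.

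The genuinely analytic content is already packaged in Theorem A and Proposition \ref{lowerbound}, so what remains is the weight asymptotics together with a sign chase. I expect the one point requiring care to be the bookkeeping with the normalizing exponents $\deg(\Delta_X)$ and $\deg(R_X)$ and, crucially, the use of the \emph{strict} inequality in Mumford's stability rather than mere semistability: it is precisely $w_\lambda(R)<0$ that upgrades $w_\lambda(\Delta)\leq w_\lambda(R)$ into the strict $w_\lambda(\Delta)<0$ demanded for stability. A secondary technical point is justifying that the norm-dependence in Theorem A cannot shift the leading coefficient of $\log|t|^2$, which follows from equivalence of norms on a finite-dimensional module.
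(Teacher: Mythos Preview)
Your proposal is correct and follows essentially the same route as the paper: the Proposition is obtained by substituting $\sigma=\lambda(t)$ into Theorem~A and reading off the standard weight asymptotics $\log\|\lambda(t)\cdot v\|^2=w_\lambda(v)\log|t|^2+O(1)$ (your remark on equivalence of continuous norms on a finite-dimensional space is exactly what justifies this for the non-Hermitian norms of Theorem~A), and the deduction of Theorem~\ref{main} via Proposition~\ref{lowerbound}, the sign inequality $w_\lambda(\Delta)\leq w_\lambda(R)$, and Mumford's strict stability $w_\lambda(R)<0$ is identical to the paper's. Your explicit tracking of the normalizing exponents $\deg(\Delta_X)$, $\deg(R_X)$ and of the positive scalar $d^2(n+1)$ is more careful than the paper, which suppresses these harmless constants.
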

 
 Proposition \ref{lowerbound} implies that for every degeneration $\lambda$ the following inequality holds
\begin{align}\label{inequality}
w_{\lambda}(\Delta)\leq w_{\lambda}(R) \ .
\end{align}
 When $d\geq 2g$ Mumfords' theorem (and the numerical criterion)  says that $w_{\lambda}(R)<0$. Therefore by (\ref{inequality}) we also have that $w_{\lambda}(\Delta)<0$ and the proof is complete.
 \end{proof}
  Despite the classical nature of the subject the following result seems to be new.
\begin{theorem}\label{plane} Let $X_F\ra \mathbb{P}^2$ be a nonsingular plane algebraic curve with $\deg(F)\geq 2$ . Then the dual curve is semistable with respect to the action of $SL(3,\mathbb{C})$.
\end{theorem}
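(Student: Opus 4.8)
The plan is to run the same argument that establishes Theorem \ref{main}, but to replace the input coming from Mumford's stability theorem for space curves by the classical stability of smooth plane curves \emph{as hypersurfaces}. The decisive point is that when $X_F\subset\mathbb{P}^2$ has degree $e=\deg(F)\geq 4$ the hypothesis $d\geq 2g$ fails (here $d=e$ while $g=\binom{e-1}{2}$), so Mumford's theorem is unavailable; nevertheless the Cayley--Chow form of a hypersurface is simply its defining equation, and for this object there is a separate classical stability statement carrying no genus restriction.

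First I would check that $X_F$ meets the hypotheses of Theorem A: it is smooth by assumption, has degree $e\geq 2$, and is linearly normal because the restriction $H^0(\mathbb{P}^2,\mathcal{O}(1))\ra H^0(X_F,\mathcal{O}_{X_F}(1))$ is an isomorphism for $e\geq 2$. Theorem A and the asymptotic expansion of the preceding Proposition then apply verbatim, giving for every degeneration $\lambda$
\begin{align*}
\lim_{|t|\ra 0}\nu_{\om}(\lambda(t))=\left(w_{\lambda}(\Delta)-w_{\lambda}(R)\right)\log|t|^2+O(1)\ .
\end{align*}
Since $g\geq 1$ whenever $e\geq 3$, Proposition \ref{lowerbound} shows $\nu_{\om}$ is bounded below, and exactly as in the derivation of (\ref{inequality}) the coefficient of $\log|t|^2$ must be nonpositive, so $w_{\lambda}(\Delta)\leq w_{\lambda}(R)$ for all $\lambda$.

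Next I would identify the normalized resultant. As a $1$-dimensional hypersurface in $\mathbb{P}^2$ the curve $X_F$ is its own Chow divisor, so $R_X$ coincides up to scale with the defining form $F$, and $w_\lambda(R)=\deg(\Delta_X)\,w_\lambda(F)$ with $\deg(\Delta_X)>0$. By the classical geometric invariant theory of hypersurfaces (Mumford \cite{git}) a smooth plane curve of degree $e\geq 3$ is $SL(3,\mathbb{C})$-stable, hence $w_\lambda(F)<0$; combined with the inequality above this yields $w_\lambda(\Delta)<0$, and by the numerical criterion the $X$-discriminant, i.e.\ the dual curve, is stable (\emph{a fortiori} semistable) for all $e\geq 3$.

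It remains to treat the conic $e=2$, which is the case I expect to be the main obstacle, since there $g=0$ and Proposition \ref{lowerbound} does not apply. One option is to invoke the Remark, which asserts $\nu_{\om}$ is still bounded below when $g=0$, and then argue as above using only that a smooth conic is $SL(3,\mathbb{C})$-semistable (its stabilizer $SO(3,\mathbb{C})$ is positive dimensional, so one cannot expect better than semistability). A cleaner alternative that sidesteps the harder $g=0$ energy bound is to observe that the dual of a smooth conic is again a smooth conic, whose defining equation is directly semistable by the hypersurface theory, settling $e=2$ by hand. Either route gives $w_\lambda(\Delta)\leq 0$ for all $\lambda$, completing the proof of semistability of the dual curve in every degree $e\geq 2$.
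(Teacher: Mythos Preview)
Your proposal is correct. The paper actually states Theorem \ref{plane} without proof, and your argument --- replacing Mumford's space-curve stability input by the classical GIT stability of smooth plane hypersurfaces (since for a plane curve the Cayley--Chow form $R_X$ is just the defining form $F$), together with your separate treatment of the conic $e=2$ via either the Remark on $g=0$ or the self-duality of smooth conics --- is precisely the natural adaptation of the proof of Theorem \ref{main} that the paper leaves implicit.
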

We have an interesting corollary of Theorem \ref{plane}.
\begin{corollary}
There exist semistable plane curves with cusp singularities.
\end{corollary}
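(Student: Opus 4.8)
The plan is to extract the examples directly from the classical projective duality of plane curves, with Theorem~\ref{plane} supplying the semistability for free. First I would fix a nonsingular plane curve $X_F\subset\mathbb{P}^2$ of degree $d=\deg(F)\geq 3$. Its dual variety $X_F^{\vee}\subset(\mathbb{P}^2)^{\vee}$ is again a plane curve, and its defining polynomial is, by definition, the $X$-discriminant $\Delta_{X_F}$. Consequently Theorem~\ref{plane} asserts at once that $X_F^{\vee}$ is semistable for the action of $SL(3,\mathbb{C})$, and the entire content of the corollary reduces to checking that $X_F^{\vee}$ is singular, with its singularities being cusps.

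For the singularities I would appeal to the Pl\"ucker formulas. The flexes of $X_F$ are cut out on the curve by its Hessian, and their number counted with multiplicity equals $3d(d-2)$. Since $d\geq 3$ this integer is strictly positive, so $X_F$ has at least one inflection point; for $F$ chosen generically among forms of degree $d$ every such flex is simple. Under projective duality a simple flex of $X_F$ is carried to a point of $X_F^{\vee}$ whose local analytic model is the ordinary cusp $u^{2}=v^{3}$. Hence $X_F^{\vee}$ acquires $3d(d-2)$ cusp singularities, and combined with the first paragraph it is a semistable plane curve with cusp singularities, which is the assertion.

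The one point demanding genuine care is the production of the cusps, and this is exactly where the hypothesis $d\geq 3$ enters: when $d=2$ the dual of a smooth conic is again a smooth conic and no cusp appears, so the vanishing of the flex count $3d(d-2)$ at $d=2$ is precisely the obstruction. For $d\geq 3$ the positivity of $3d(d-2)$ together with the genericity of $F$ guarantees both the existence of the singular points and their cuspidal type. I expect no further difficulty, since the semistability — ordinarily the hard part of such a statement — has already been established in Theorem~\ref{plane}, leaving only this classical local computation.
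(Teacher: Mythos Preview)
Your proposal is correct and is precisely the argument the paper has in mind: the paper gives no explicit proof of the corollary, merely labeling it ``an interesting corollary of Theorem~\ref{plane}'', so the intended reasoning is exactly yours --- apply Theorem~\ref{plane} to a smooth $X_F$ with $\deg(F)\geq 3$ and invoke the classical Pl\"ucker theory to see that the semistable dual curve $X_F^{\vee}$ carries cusps arising from the flexes of $X_F$. Your observation that $d\geq 3$ is needed (since a smooth conic dualizes to a smooth conic) and your use of a generic $F$ to guarantee that the flexes are simple, hence that the dual singularities are ordinary cusps, are the right refinements to make the one-line corollary rigorous.
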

 \bibliography{ref.bib}
\end{document}